\title{An Elementary Proof of a Theorem of Hardy and Ramanujan
}
\author{Asaf Cohen Antonir\thanks{School of Mathematical Sciences, Tel Aviv University, Tel Aviv, 6997801, Israel.
Email: asafc1$@$tauex.tau.ac.il} \and Asaf Shapira \thanks{School of Mathematics, Tel Aviv University, Tel Aviv 69978, Israel. Email: asafico@tau.ac.il. Supported in part
by ERC Consolidator Grant 863438 and NSF-BSF Grant 20196.}}
\date{\today}
\theoremstyle{plain}
\newtheorem{theorem}{Theorem}[section]
\newtheorem{lemma}[theorem]{Lemma}
\def\moverlay{\mathpalette\mov@rlay}
\def\mov@rlay#1#2{\leavevmode\vtop{%
   \baselineskip\z@skip \lineskiplimit-\maxdimen
   \ialign{\hfil$\m@th#1##$\hfil\cr#2\crcr}}}
\newcommand{\charfusion}[3][\mathord]{
    #1{\ifx#1\mathop\vphantom{#2}\fi
        \mathpalette\mov@rlay{#2\cr#3}
      }
    \ifx#1\mathop\expandafter\displaylimits\fi}
\renewenvironment{proof}[1][\proofname]
{\par\pushQED{\qed}
	\normalfont\topsep6\p@\@plus6\p@\relax\trivlist
	\item[\hskip\labelsep\bfseries#1\@addpunct{.}]
	\ignorespaces}
{\popQED\endtrivlist\@endpefalse}
\definecolor{RED}{rgb}{1,0,0}\definecolor{BLUE}{rgb}{0,0,1} 
\begin{document}
\date{}
\maketitle

\begin{abstract}
Let $Q(n)$ denote the number of integers $1 \leq q \leq n$
whose prime factorization $q= \prod^{t}_{i=1}p^{a_i}_i$ satisfies $a_1\geq a_2\geq \ldots \geq a_t$.
Hardy and Ramanujan proved that
$$
\log Q(n) \sim \frac{2\pi}{\sqrt{3}} \sqrt{\frac{\log(n)}{\log\log(n)}}\;.
$$
Before proving the above precise asymptotic formula, they studied in great detail what can be obtained concerning $Q(n)$ using
purely elementary methods, and were only able to obtain much cruder lower and upper bounds using such methods.

In this paper we show that it is in fact possible to obtain a purely elementary (and much shorter)
proof of the Hardy--Ramanujan Theorem.
Towards this goal, we first give a simple combinatorial argument,
showing that $Q(n)$ satisfies a (pseudo) recurrence relation. This enables us to replace almost all the
hard analytic part of the original proof with a short inductive argument.
\end{abstract}

\section{Introduction}

Let $\ell_k=p_1\cdot p_2 \cdots p_k$ denote the product of the first $k$ prime numbers,
and take $\mathcal{Q}$ to be the set of integers $q$ which can be expressed as $q=\ell_1^{b_1}\cdot \ell_2^{b_2}\cdots \ell_{t}^{b_t}$,
for some $t \geq 1$ and sequence of non-negative integers $b_1,\ldots,b_t$. Set $Q(n)=|\mathcal{Q}\cap [n]|$, and note that this definition of $Q(n)$ is equivalent to the one given in the abstract. The problem of bounding $Q(n)$
was introduced by Hardy and Ramanujan \cite{HR1}. As they explained, their motivation for studying this problem was its relation to highly
composite numbers \cite{Ramanujan}, its relation to variants of the partition function (see below), as well as the methods used in order to estimate $Q(n)$. The main result of \cite{HR1} was the tight asymptotic bound
\begin{equation}\label{eqHR}
\log Q(n) \sim \frac{2\pi}{\sqrt{3}} \sqrt{\frac{\log(n)}{\log\log(n)}}\;.
\end{equation}
In the first section of their paper, they studied what bounds can be obtained regarding $\log Q(n)$ using purely elementary methods.
They were only able to use such methods in order to prove the much cruder bounds
$$
C_1\sqrt{\frac{\log(n)}{\log\log(n)}} \leq \log Q(n) \leq C_2 \sqrt{\log(n)\log\log(n)}\;.
$$
They then used far more sophisticated methods (see below) in order to prove (\ref{eqHR}), not before noting that
{\em ``to obtain these requires the use of less elementary methods''}. We will show in this paper
that one can in fact prove (\ref{eqHR}) via a completely elementary and short argument. But prior
to discussing our new proof, let us first put it in perspective.

\subsection{Historical perspective}

The paper in which Hardy and Ramanjuan studied $Q(n)$ \cite{HR1} was followed a year later by the celebrated paper \cite{HarRam1918} in which they obtained their famous asymptotic formula for the partition function $p(n)$. Much as they have done in \cite{HR1}, they devoted
the first section of \cite{HarRam1918} to study what bounds can be obtained regarding $p(n)$ using purely elementary methods.
They proved that $2\sqrt{n} \leq \log p(n) \leq \sqrt{8n}$, and remarked that they can prove the right asymptotic bound
\begin{equation}\label{eqpn}
\log p(n) \sim \pi \sqrt{2n/3}
\end{equation}
using the methods they used in \cite{HR1} in their proof of (\ref{eqHR}), but that this proof {\em ``is of the difficult and delicate type''.}
Indeed, in his book on Ramanujan's work from 1940 \cite{Ram}, Hardy remarked that {\em ``It is actually true that $\log p(n) \sim \pi \sqrt{2n/3}$..., but we cannot prove this very simply''.}
This shortcoming was resolved two years later by Erd\H{o}s \cite{Erdos}, who came
up with an ingeniously simple proof of (\ref{eqpn}). His main idea was to take advantage of a certain recurrence relation satisfied
by $p(n)$ in order to bound $ p(n)$ by induction on $n$. See \cite{Nat1999,Nat2000} for further background and references.

With this perspective in mind, what we obtain in this paper can be considered an Erd\H{o}s-type proof of (\ref{eqHR}).
Unlike the case of $p(n)$, there is (to the best of our knowledge) no recurrence relation involving $Q(n)$.
This suggests that Erd\H{o}s's approach cannot be used to prove (\ref{eqHR}).
However, as we explain below, there is an approximate such relation, which turns out to be sufficient for proving (\ref{eqHR}).

\subsection{Our simplification}\label{subsec:simple}

The proof of (\ref{eqHR}) in \cite{HR1} began with an estimate for the function $\phi(\delta)=\sum_{k=1}^{\infty}\ell_k^{-\delta}$.
Hardy and Ramanujan \cite{HR1} then used their estimate for $\phi(\delta)$ in order to estimate the generating function of $Q(n)$ around zero. They then moved to the second and main step of their proof in which they proved a Tauberian theorem,
which enabled them to translate their estimate for the generating function of $Q(n)$, to the estimate for $Q(n)$ in (\ref{eqHR}).
Before describing our simplification, it is worth remarking that another indication that Hardy and Ramanujan were genuinely interested in keeping their proof as elementary as possible, is that one can easily estimate $\phi(\delta)$ while relying on the prime number theorem (see Appendix \ref{sec:phi} for the short proof). They instead found a remarkable identity (see \eqref{eq_Har_Ram}) which enabled them to estimate $\phi(\delta)$ while relying only on the elementary fact that Chebyshev's function $\vartheta(x)$ is of order $x$.

As in \cite{HR1}, our proof starts with a certain sum estimate.
Set $\Phi(\delta)=\sum_{k=1}^{\infty}\log(\ell_k)\ell_k^{-\delta}$.

\begin{lemma}\label{lem:technical}
We have $\Phi(\delta)\sim \delta^{-2}/\log(1/\delta)$ as $\delta \rightarrow 0$.
\end{lemma}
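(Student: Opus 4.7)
The plan is to prove $\Phi(\delta)\sim \delta^{-2}/\log(1/\delta)$ in three stages: reduce the sum to a clean integral, perform a change of variables, and finish with a Laplace-type estimate.

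By the Prime Number Theorem (or the elementary argument indicated in Appendix~\ref{sec:phi}), $\log\ell_k=\vartheta(p_k)=(1+o(1))p_k$ and $p_k=(1+o(1))k\log k$, so $\log\ell_k=(1+o(1))k\log k$. First, I would compare
\[
\Phi(\delta)=\sum_{k\ge 1}\log(\ell_k)\,\ell_k^{-\delta}
\]
to the integral
\[
I(\delta)\;:=\;\int_1^\infty x\log x\cdot e^{-\delta x\log x}\,dx .
\]
To justify $\Phi(\delta)\sim I(\delta)$, I would note that the summand $f(k)e^{-\delta f(k)}$, with $f(k)=\log\ell_k$, is unimodal in $k$: it peaks where $\delta f(k)\sim 1$, i.e.\ at $k_0\sim 1/(\delta\log(1/\delta))$, and has effective width of order $k_0$. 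Consecutive differences $f(k{+}1)-f(k)=\log p_{k+1}$ are much smaller than this width, so the sum is approximated by its Riemann integral. Crucially, throughout the peak region $\delta f(k)=O(1)$, so the multiplicative $(1+o(1))$-error in $f(k)\sim k\log k$ only produces a $(1+o(1))$-factor in $e^{-\delta f(k)}$, preserving the $\sim$ relation.

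Second, I would evaluate $I(\delta)$ by the substitution $y=x\log x$, under which $[1,\infty)$ maps bijectively onto some $[y_0,\infty)$ with $x\sim y/\log y$, $\log x\sim\log y$ and $dy=(\log x+1)\,dx$. Hence
\[
I(\delta)\;\sim\;\int_{y_0}^\infty \frac{y\,e^{-\delta y}}{\log y}\,dy .
\]
To extract the asymptotic, fix a small $\eta>0$ and split into the ranges $(y_0,\delta^{-1+\eta}]$, $(\delta^{-1+\eta},\delta^{-1-\eta})$ and $[\delta^{-1-\eta},\infty)$. Bounding $e^{-\delta y}\le 1$ gives a contribution of at most $O(\delta^{-2+2\eta}/\log(1/\delta))$ from the first range; the third range contributes an exponentially small tail (in $\delta^{-\eta}$); both are negligible compared with $\delta^{-2}/\log(1/\delta)$. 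On the middle range one has $\log y=(1+O(\eta))\log(1/\delta)$, so that using $\int_0^\infty y e^{-\delta y}\,dy=\delta^{-2}$ the contribution is $(1+O(\eta))\,\delta^{-2}/\log(1/\delta)$. Letting $\eta\to 0$ after $\delta\to 0$ yields $I(\delta)\sim \delta^{-2}/\log(1/\delta)$.

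The step I expect to be the main obstacle is the first: turning the discrete sum into an integral with an asymptotic (and not merely order-of-magnitude) equality. The saving grace, as noted above, is that $\delta\log\ell_k=O(1)$ everywhere the sum contributes substantially, so small multiplicative perturbations in $\log\ell_k$ survive exponentiation as $(1+o(1))$-factors. Once this reduction is in place, the remaining integral estimate is a routine Laplace-style argument.
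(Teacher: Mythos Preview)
Your approach is correct and is essentially the one the paper sketches in Appendix~\ref{sec:phi}: use $\log\ell_k\sim k\log k$ (which is the Prime Number Theorem) to sandwich $\Phi(\delta)$ between $\sum_k (1\pm\varepsilon)k\log k\,e^{-(1\mp\varepsilon)\delta k\log k}$, pass to the integral, substitute $y=x\log x$, and read off $\delta^{-2}/\log(1/\delta)$ by a Laplace-type localization. One small remark: your parenthetical ``or the elementary argument indicated in Appendix~\ref{sec:phi}'' is not quite right---that appendix \emph{assumes} the PNT; the asymptotic $\log\ell_k\sim k\log k$ is itself equivalent to the PNT and is not available from Chebyshev alone.

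The paper's own proof in Section~\ref{sec:tech} takes a genuinely different and more delicate route precisely to avoid the PNT, in keeping with the paper's aim of giving a fully elementary argument. It starts from the Hardy--Ramanujan identity \eqref{eq_Har_Ram} for $\phi(\delta)=\sum_k\ell_k^{-\delta}$, differentiates it to obtain an integral representation for $\Phi(\delta)$ involving $\vartheta(x)$, and then estimates that integral using only the Chebyshev bounds $C_1x\le\vartheta(x)\le C_2x$; the key point is that after the identity the dependence on $\vartheta$ appears only through $(\delta\vartheta(x)+1)e^{-\delta\vartheta(x)}$, whose asymptotics are insensitive to the constant in front of $x$. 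So: your proof is shorter and cleaner but imports the PNT; the paper's proof is longer and relies on the special identity \eqref{eq_Har_Ram}, but stays within the elementary framework that is the whole point of the paper.
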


At this point our proof departs from that of \cite{HR1} by doing away with its entire hard analytic part.
This is achieved by the following (pseudo\footnote{One expects that most of the contribution to $Q(n)$
comes from integers $q$ satisfying $\log (q) \approx \log (n)$. In this case we expect $W(n) \approx n^{Q(n)}$, which then turns
equation (\ref{eq:W}) into a ``genuine'' recurrence relation.}) recurrence relation.

\begin{lemma}\label{lem:recursion_Q}
For every integer $n$ set $W(n):=\prod_{m\in \mathcal{Q}\cap [n]}m$. Then the following relation holds
\begin{equation}\label{eq:W}
    W(n)= \prod_{k}\ell_k^{\sum _{s} Q(\lfloor n/\ell_k^{s}\rfloor)}\;.
\end{equation}
\end{lemma}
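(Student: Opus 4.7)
The plan is to verify \eqref{eq:W} by taking logarithms, so that a product identity becomes a sum identity which can be proved by a double-counting / bijection argument. After taking logs, the identity to prove is
\[
\sum_{m\in \mathcal{Q}\cap[n]} \log m \;=\; \sum_{k} \log(\ell_k)\sum_{s\geq 1} Q(\lfloor n/\ell_k^s\rfloor).
\]

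First I would record the (easy) observation that every $m\in\mathcal{Q}$ admits a \emph{unique} representation $m=\prod_{k}\ell_k^{b_k(m)}$ with $b_k(m)\in\mathbb{Z}_{\geq 0}$ and all but finitely many $b_k(m)$ equal to zero. This follows because the change of variables $a_i=\sum_{k\geq i}b_k$ is a bijection between sequences of nonnegative integers $(b_k)$ and nonincreasing sequences of nonnegative integers $(a_i)$, so the two definitions of $\mathcal{Q}$ (the one from the abstract and the one at the start of the introduction) coincide, and the $b_k$'s are determined by $m$ via $b_k(m)=a_k-a_{k+1}$.

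With this in hand, $\log m=\sum_k b_k(m)\log\ell_k$, so swapping the order of summation yields
\[
\sum_{m\in \mathcal{Q}\cap[n]}\log m \;=\; \sum_{k}\log(\ell_k)\sum_{m\in\mathcal{Q}\cap[n]}b_k(m).
\]
Thus it suffices, for each fixed $k$, to prove $\sum_{m\in\mathcal{Q}\cap[n]} b_k(m)=\sum_{s\geq 1}Q(\lfloor n/\ell_k^s\rfloor)$. Writing $b_k(m)=\sum_{s\geq 1}\mathbf{1}[b_k(m)\geq s]$ and swapping sums once more reduces this to the statement
\[
\bigl|\{m\in\mathcal{Q}\cap[n]:b_k(m)\geq s\}\bigr|\;=\;Q(\lfloor n/\ell_k^s\rfloor)
\]
for every $k$ and every $s\geq 1$.

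The last identity is the combinatorial heart of the lemma, but it is essentially immediate from the uniqueness of the representation: the map $m\mapsto m/\ell_k^s$ is a bijection between $\{m\in\mathcal{Q}\cap[n]:b_k(m)\geq s\}$ and $\mathcal{Q}\cap[\lfloor n/\ell_k^s\rfloor]$. Indeed, if $b_k(m)\geq s$ then $m/\ell_k^s=\prod_{k'\neq k}\ell_{k'}^{b_{k'}(m)}\cdot\ell_k^{b_k(m)-s}\in\mathcal{Q}$ and is at most $n/\ell_k^s$; conversely multiplying any $m'\in\mathcal{Q}\cap[\lfloor n/\ell_k^s\rfloor]$ by $\ell_k^s$ produces an element of $\mathcal{Q}\cap[n]$ with $b_k$-exponent at least $s$. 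There is no real obstacle here; the only point to be careful about is the well-definedness of $b_k$, i.e.\ the uniqueness of the $\ell_k$-representation, which is what the first paragraph establishes.
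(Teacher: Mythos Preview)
Your proof is correct and follows essentially the same approach as the paper's: both rely on the uniqueness of the $\ell_k$-representation, rewrite the contribution of each $m$ as $\sum_k b_k(m)\log\ell_k$, use the identity $b_k(m)=\sum_{s\ge 1}\mathbf{1}[b_k(m)\ge s]$ (the paper phrases this as $\sum_s sQ(n,k,s)=\sum_s Q^*(n,k,s)$), and then identify $|\{m\in\mathcal{Q}\cap[n]:b_k(m)\ge s\}|$ with $Q(\lfloor n/\ell_k^s\rfloor)$. The paper compresses all of this into a single displayed chain of equalities, but the content is the same.
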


\begin{proof}
Let $Q(n,k,s)$ and $Q^{*}(n,k,s)$ be the number of integers in ${\cal Q}\cap [n]$ for which $b_k=s$ and $b_k\geq s$ respectively.
Since each integer has at most one representation as a product of $\ell_k$, we have
\[
\pushQED{\qed}
W(n)=\prod_{s,k} \ell_k^{s\cdot Q(n,k,s)}=\prod_k\ell_k^{\sum_{s}sQ(n,k,s)}\\
    =\prod_{k}\ell_k^{\sum_{s}Q^{*}(n,k,s)}=\prod_{k}\ell_k^{\sum _{s} Q(\lfloor n/\ell_k^{s}\rfloor)}.\qedhere
\]
\end{proof}

What Lemma \ref{lem:recursion_Q} gives us is the ability to prove (\ref{eqHR})
by induction on $n$ (with the aid of Lemma \ref{lem:technical}). We prove the upper bound of (\ref{eqHR})
in the next section. Lemma \ref{lem:technical} is proved in Section \ref{sec:tech}. The proof of the lower bound of (\ref{eqHR}), which is almost identical to the proof of the upper bound, is given in Section \ref{sec:lower}.
The appendix contains proofs of a few elementary inequalities.

\section{An Upper Bound for $Q(n)$}\label{sec:upper}

Set $c=2\pi/\sqrt{3}$ and fix any $0 < \varepsilon<1/2$.
We will prove that $Q(n) \leq K e^{(1+\varepsilon)c\sqrt{{\log(Cn)}/{\log(\log(Cn))}}}$ by induction on $n$, where $K=K(\varepsilon)$ and $C=e^{10}+1$. Note that by choosing $K$ large enough, we can assume that the induction assumption holds for all $n \leq n_0(\varepsilon)$, allowing us to assume in what follows that $n \geq n_0(\varepsilon)$. We first split the contributions of $Q(n)$ as follows
\begin{align*}
    Q(n)&=Q(n^{1-\varepsilon/4})+(Q(n)-Q(n^{1-\varepsilon/4}))\\
    &\leq K e^{(1+\varepsilon)c\sqrt{{\log(Cn^{1-\varepsilon/4})}/{\log(\log(Cn^{1-\varepsilon/4}))}}}+(Q(n)-Q(n^{1-\varepsilon/4}))\\
    &\leq \frac{\varepsilon}{8} K e^{(1+\varepsilon)c\sqrt{{\log(Cn)}/{\log(\log(Cn))}}}+(Q(n)-Q(n^{1-\varepsilon/4}))\;.
\end{align*}
The first inequality holds by induction and the second holds provided $n$ is large enough.
Hence, proving that $Q(n)-Q(n^{1-\varepsilon/4}) \leq (1-\frac{\varepsilon}{8})K e^{(1+\varepsilon)c\sqrt{{\log(Cn)}/{\log(\log(Cn))}}}$
would complete the proof. Since
\[
    \log W(n) \geq (1-\varepsilon/4)\log(n)(Q(n)-Q(n^{1-\varepsilon/4}))
\]
we just need to establish that for large enough $n$
\begin{equation}\label{eq0}
    \log W(n)\leq (1-\varepsilon/2)\log(n)K e^{(1+\varepsilon)c\sqrt{{\log(Cn)}/{\log(\log(Cn))}}}\;.
\end{equation}

To simplify the presentation, we set $m=\log(Cn)$. Further, set $f(x)=\sqrt{x/\log(x)}$ and denote its derivative by $f'(x)=\frac{1-1/\log(x)}{2\sqrt{x\log(x)}}$. We will use the fact\footnote{This follows directly from Lagrange's remainder theorem, see Lemma \ref{lem:Taylor_series} in the appendix for a detailed proof.} that for every $r>10$ and $0\leq t\leq r-10$
\begin{equation}\label{eq:firstderivative}
f(r-t)\leq f(r)-t f'(r)\;.
\end{equation}

Using Lemma \ref{lem:recursion_Q} and then applying induction, we can bound $\log W(n)$ as follows
\begin{align}\label{eq:less_than_n_primes}
    \log W(n) &= \sum_{k}\sum _{s} \log(\ell_k)Q(\lfloor n/\ell_{k}^{s}\rfloor ) \leq \sum_{s}\sum _{k}\log(\ell_k) K e^{(1+\varepsilon)c f(\log(Cn/\ell_k^s))}\nonumber\\
    &\leq Ke^{(1+\varepsilon)cf(m)}\sum _{s=1}^{\infty}\sum_{k=1}^{\infty} \log(\ell_k)\ell_k^{-(1+\varepsilon)c s f'(m)}\;,
\end{align}
where the second inequality\footnote{When we write $f'(m)$, we mean substituting $x=m=\log(Cn)$ in $f'(x)=\frac{1-1/\log(x)}{2\sqrt{x\log(x)}}$.} holds by (\ref{eq:firstderivative}) with $r=m$ and $t=\log(\ell^s_k)$ noting that $m=\log(Cn)\geq 10$ and assuming that in the first line we only consider indices $k,s$ so that $\log(\ell_k^{s}) \leq \log(n) \leq m-10$ (as otherwise $Q(\lfloor n/\ell_{k}^{s}\rfloor)=0$).

Hence, to complete the proof of (\ref{eq0}) it remains to establish that the double sum in \eqref{eq:less_than_n_primes} is bounded from above by $(1-\varepsilon/2) \log(n)$. Since $c \geq 1$ and
$\alpha \lceil k\log(k+1) \rceil \leq \log(\ell_k)\leq \beta \lceil k\log(k+1)\rceil$ for all $k\geq 1$\footnote{See \cite{Nat2000} for an elementary proof that such $\alpha$ and $\beta$ exist.}, this double sum is clearly bounded from above by $S_1+S_2$ where
\[
  S_1={\sum _{s=1}^{(\log\log(n))^2}\sum_{k=1}^{\infty} \log(\ell_k)\ell_k^{-(1+\varepsilon)c s f'(m)}}\quad\text{and}\quad S_2=\sum _{s=(\log\log(n))^2}^{\infty}\sum_{k=1}^{\infty} \beta ke^{-\alpha s f'(m)k}\;.
\]

\paragraph{Bounding $S_1$:}
By Lemma \ref{lem:technical} there is a $\delta_0=\delta_0(\varepsilon)$ so that $\Phi(\delta)\leq {(1+\varepsilon/12)\delta^{-2}}/{\log(1/\delta)}$ holds for every $\delta < \delta_0$.
Assume $n$ is large enough so that $(1+\varepsilon)c  (\log\log(n))^2 f'(m)\leq \delta_0$. Then,
\begin{align*}
     S_1&\leq  \sum_{s=1}^{(\log\log(n))^2} \frac{(1+\varepsilon/12)f'(m)^{-2}}{(1+\varepsilon)^2c^2 \log\left(\frac{1}{(1+\varepsilon)c s f'(m)}\right)s^2}
     \leq \frac{8(1+\varepsilon/12)^2\log(n)}{(1+\varepsilon)^2c^2}\sum_{s=1}^{\infty} \frac{1}{s^2}\leq \left(1-\varepsilon\right)\log(n)\;.
\end{align*}
The first inequality uses Lemma \ref{lem:technical} with $\delta=(1+\varepsilon)c s f'(m) ~~(\leq \delta_0)$, and the second inequality holds as $\frac{f'(m)^{-2}}{-\log\left({(1+\varepsilon)c s f'(m)}\right)}\leq 8(1+\varepsilon/12)\log(n)$ for all $1\leq s\leq (\log\log(n))^2$ and large $n$.

\paragraph{Bounding $S_2$:}
Since $S_1 \leq (1-\varepsilon)\log(n)$ it remains to prove that $S_2 \leq \frac12\varepsilon \log(n)$. Indeed
\begin{align*}
    S_2&= \sum_{s=(\log\log(n))^2}^{\infty}\frac{\beta e^{- \alpha s f'(m)}}{\left(1-e^{-\alpha s f'(m)}\right)^2}\leq  \frac{\beta}{\alpha^2 f'(m)^2}\sum _{s=(\log\log(n))^2}^{\infty} \frac{1}{s^{2}} \leq \frac{9\beta m\log(m)}{\alpha^2(\log\log(n)^2-1)}\leq \frac12\varepsilon \log(n)\;.
\end{align*}
The equality holds as $\sum_{a=1}^{\infty}at^a=\frac{t}{(1-t)^2}$, the first inequality uses the elementary
fact 
(see Lemma \ref{lem:estimations}) $\frac{e^{-z}}{(1-e^{-z})^2}\leq \frac{1}{z^2}$, and the second inequality holds as for all $t$ we have $\sum_{s=t}^{\infty}\frac{1}{s^2}\leq \int_{t-1}^{\infty}\frac{1}{x^2}dx=\frac{1}{t-1}$.

\section{Proof of Lemma \ref{lem:technical}}\label{sec:tech}
Throughout the proof we will use Chebyshev's function, $\vartheta(x)=\sum_{p\leq x} \log(p)$, and the elementary\footnote{It is much easier to prove Lemma \ref{lem:technical} if one is willing to rely on the prime number theorem, see Appendix \ref{sec:phi}.} fact that $C_1x\leq \vartheta(x)\leq C_2x$ (see \cite{Nat2000} for a proof). We start with the following identity from \cite{HR1}:
\begin{align}\label{eq_Har_Ram}
    \sum_{k=1}^{\infty} \ell_k^{-\delta}&=\sum_{k=1}^{\infty}\frac{1-1/p^{\delta}_k}{p^{\delta}_k-1}\prod_{1 \leq i \leq k-1}p^{-\delta}_i=\frac{1}{p_1^{\delta}-1}+\sum_{k=1}^{\infty}\ell_k^{-\delta}\left(\frac{1}{p_{k+1}^{\delta}-1}-\frac{1}{p_k^{\delta}-1}\right) \nonumber\\
    &=\frac{1}{2^{\delta}-1}-\sum_{k=1}^{\infty}\ell_k^{-\delta}\int_{p_k}^{p_{k+1}}\frac{\delta x^{\delta-1}}{(x^{\delta}-1)^2}dx \nonumber\\
    &=\frac{1}{2^\delta-1}-\sum_{k=1}^{\infty}\int_{p_k}^{p_{k+1}}\frac{\delta x^{\delta-1}e^{-\delta \vartheta(p_k)}}{(x^{\delta}-1)^2}dx\;.
\end{align}
Differentiating\footnote{By the Weierstrass $M$-test, both sides of (\ref{eq_Har_Ram}), and their term by term derivatives form a (locally) uniformly convergent series on $(0,1)$. Hence, we can derive them by differentiating term by term, see \cite[Theorems 7.10 and 7.17]{Rud1976}.} both sides with respect to $\delta$ (while differentiating under the integral sign) we obtain:
\begin{align*}
    \Phi(\delta)&=-\frac{2^{\delta}\log(2)}{(2^\delta-1)^2}+\sum_{k=1}^{\infty}\int_{p_k}^{p_{k+1}}\frac{x^{\delta}e^{-\delta \vartheta(p_k)}((x^{\delta}-1)(\delta \vartheta(p_k)-1)+\delta(x^{\delta}+1)\log(x))}{(x^{\delta}-1)^3x}dx\\
    &=-\frac{2^{\delta}\log(2)}{(2^\delta-1)^2}+\sum_{k=1}^{\infty}\int_{p_k}^{p_{k+1}}\frac{x^{\delta}e^{-\delta \vartheta(x)}((x^{\delta}-1)(\delta \vartheta(x)-1)+\delta(x^{\delta}+1)\log(x))}{(x^{\delta}-1)^3x}dx\\
    &=-\frac{2^{\delta}\log(2)}{(2^\delta-1)^2}+\int_{2}^{\infty}\frac{x^{\delta}e^{-\delta \vartheta(x)}((x^{\delta}-1)(\delta \vartheta(x)-1)+\delta(x^{\delta}+1)\log(x))}{(x^{\delta}-1)^3x}dx\\
    &=-\frac{2^{\delta}\log(2)}{(2^\delta-1)^2}+\int_{2}^{\infty}\frac{\delta \vartheta(x)x^{\delta}e^{-\delta \vartheta(x)}}{(x^{\delta}-1)^2x}dx+\int_{2}^{\infty}\frac{x^\delta((x^\delta+1)\log(x^\delta)-x^\delta+1)}{(x^\delta-1)^3}\frac{e^{-\delta\vartheta(x)}}{x}dx
\end{align*}
Fixing $\varepsilon >0$ we need to prove that $|\Phi(\delta)-\frac{\delta^{-2}}{\log(1/\delta)}| \leq \frac{\varepsilon\delta^{-2}}{\log(1/\delta)}$ for all $\delta < \delta_0(\varepsilon)$. Denoting each of the above summands by $I_1,I_2,I_3$ from left to right, we now evaluate each of them.
To do so we will use the following elementary inequalities\footnote{Both inequalities can be proved by replacing $e^z$ with its power series. For a proof see Lemma \ref{lem:estimations} in the appendix.} which hold for all $z>0$:
\begin{equation}\label{eq_elem_ineq}
    \left|\frac{e^z}{(e^z-1)^2}-\frac{1}{z^2}\right|\leq \frac{1}{12}\quad\text{and}\quad\left|\frac{e^z((e^z+1)z-e^z+1)}{(e^z-1)^3}-\frac{1}{z^2}\right|\leq1\;.
\end{equation}
Applying the first inequality in \eqref{eq_elem_ineq} to $I_1$ with $z=\delta\log(2)$ we have
\begin{equation}\label{eq:first_term}
    \left|I_1+\frac{1}{\log(2)\delta^2}\right| \leq \frac{1}{12}\;.
\end{equation}
Applying the same inequality to $I_2$ with $z=\delta\log(x)$ we have
\begin{align}\label{eq:second_term}
    \left|I_2-\int_{2}^{\infty}\frac{\delta \vartheta(x)e^{-\delta \vartheta(x)}}{\delta^2\log^2(x)x}dx\right|\leq
    \int_{2}^{\infty}\frac{\delta \vartheta(x)e^{-\delta \vartheta(x)}}{12x}dx  \leq \int_{2}^{\infty}\frac{\delta C_2e^{-\delta C_1 x}}{12}dx \leq \frac{C_2}{12C_1}\;,
\end{align}
where the second inequality holds as $C_1x\leq \vartheta(x)\leq C_2x$ for all $x>0$. Finally, applying the second inequality in \eqref{eq_elem_ineq} to $I_3$ with $z=\delta\log(x)$ we have
\begin{align}\label{eq:third_term_a}
    \left|I_3-\int_{2}^{\infty}\frac{e^{-\delta \vartheta(x)}}{\delta^2\log^2(x)x}dx\right|&\leq \int_{2}^{\infty}\frac{e^{-\delta \vartheta(x)}}{x}dx\leq \frac{1}{\delta C_1}\;,
\end{align}
where the second inequality holds as $\vartheta (x)\geq C_1x$.
We conclude that provided $\delta$ is sufficiently small, then \eqref{eq:first_term},\eqref{eq:second_term}, and \eqref{eq:third_term_a} imply that
\begin{equation}\label{eq_last_integral}
    \left|\Phi(\delta)- \int_{2}^{\infty}\frac{(\delta\vartheta(x)+1)e^{-\delta \vartheta(x)}}{\delta^2\log^2(x)x}dx+\frac{1}{\log(2)\delta^2}\right|\leq\frac{\varepsilon/2}{\delta^2\log(1/\delta)}\;.
\end{equation}

For every positive $C$ let $f_C(\delta)=\int_{2}^{\infty}\frac{(\delta Cx+1)e^{-\delta Cx}}{\delta^2\log^2(x)x}dx$. Since $(x+1)e^{-x}$ is monotone decreasing for all $x>0$ and $C_1x\leq \vartheta(x)\leq C_2x$, the integral in \eqref{eq_last_integral} is bounded from above and below by $f_{C_1}(\delta),f_{C_2}(\delta)$, respectively. Hence, to conclude the proof we will prove that for every $C$, $\varepsilon>0$ and small enough $\delta$
\begin{equation}\label{eq*}
    \left|f_C(\delta)- \frac{1}{\log(2)\delta^2}-\frac{1}{\delta^2\log(1/\delta)}\right| \leq \frac{\varepsilon/2}{\delta^2\log(1/\delta)}\;.
\end{equation}
Indeed, using integration by parts and then change of variables we obtain:
\begin{align}\label{eq_whatisf}
    f_C(\delta)&=\frac{(2\delta C+1)e^{-2\delta C}}{\log(2)\delta^2}+C^2\int_{2}^{\infty}\frac{xe^{-\delta Cx}}{\log(x)}dx =\frac{(2\delta C+1)e^{-2\delta C}}{\log(2)\delta^2}+ \frac{1}{\delta^2}\int_{2\delta C}^{\infty}\frac{ze^{-z}}{\log(z/C\delta)} dz\;.
\end{align}
It remains to estimate the last integral. Since $xe^{-x}\leq 1$ and $\int_{0}^{\infty}ze^{-z}dz=1$ we have
\begin{align}\label{eq_upperboundforf}
    \int_{2\delta C}^{\infty}\frac{ze^{-z}}{\log(z/C\delta)} dz&\leq \int_{2\delta C}^{C/\log^2(1/\delta)}\frac{ze^{-z}}{\log(z/C\delta)} dz+\int_{C/\log^2(1/\delta)}^{\infty}\frac{ze^{-z}}{\log(z/C\delta)} dz\nonumber\\
    &\leq \frac{C}{\log(2)\log^2(1/\delta)}+\frac{1}{\log(1/(\delta\log^2(1/\delta)))}\leq \frac{1+\varepsilon/4}{\log(1/\delta)}\;,
\end{align}
and
\begin{align}\label{eq_lowerboundforf}
     \int_{2\delta C}^{\infty}\frac{ze^{-z}}{\log(z/C\delta)} dz\geq \int_{2\delta C}^{C\log(1/\delta)}\frac{ze^{-z}}{\log(z/C\delta)} dz\geq \int_{2\delta C}^{C\log(1/\delta)}\frac{{ze^{-z}}}{\log(\log(1/\delta)/\delta)} dz \geq \frac{1-\varepsilon/4}{\log(1/\delta)}\;,
\end{align}
for small enough $\delta$. Combining \eqref{eq_whatisf},\eqref{eq_upperboundforf},\eqref{eq_lowerboundforf} we obtain \eqref{eq*} for every small enough $\delta$.

\section{A Lower Bound for $Q(n)$}\label{sec:lower}

The proof is almost identical to the proof in Section \ref{sec:upper}. The only difference is that instead of (\ref{eq:firstderivative}) we now use (\ref{eq:secondderivative}), hence we need to account for $f''$. Set $c=2\pi/\sqrt{3}$ and fix any $0 < \varepsilon<1/2$.
We will prove that $Q(n) \geq \frac{1}{K} e^{(1-\varepsilon)c\sqrt{\log(Cn)/\log\log(Cn)}}$ by induction on $n$, where $K=K(\varepsilon)$, and $C=e^{10^3}+1$. Note that by choosing $K$ large enough, we can assume that the induction assumption holds for all $n \leq n_0(\varepsilon)$, allowing us to assume in what follows that $n \geq n_0(\varepsilon)$.

To simplify the presentation we set $m= \log(Cn)$. Further, set $f(x)=\sqrt{x/\log(x)}$, denote its derivative by $f'(x)=\frac{1-1/\log(x)}{2\sqrt{x\log(x)}}$, and its second derivative by $f''(x)=\frac{-1+3/\log^2(x)}{4x\sqrt{x\log(x)}}$.
We will next use the fact\footnote{This follows by direct computation, see Lemma \ref{lem:Taylor_series} in the appendix for a detailed proof.} that for $r\geq 10^3$ and any $0\leq t\leq r/10$
\begin{equation}\label{eq:secondderivative}
f(r-t)\geq f(r)-t f'(r)+t^2f''(r)\;.
\end{equation}

Setting $s'=\log\log(n)$ and $k'=\log^{3/4}(n)$ we have $\ell_{k'}^{s'}\leq n^{1/10}$ for large enough $n$, and hence
\begin{align}\label{eq:less_than_n_primes_2}
    \log(n)Q(n)&\geq \log\left(W(n)\right) = \sum_{k}\sum _{s} \log(\ell_k)Q(\lfloor n/\ell_{k}^{s}\rfloor)\nonumber\\
    &\geq \sum_{s\leq s'}\sum _{k\leq k'}\log(\ell_k) \frac{1}{K} e^{(1-\varepsilon)cf(\log(C n)-(1+\varepsilon/4)\log(\ell_k^s))}\nonumber\\
    &\geq \frac{1}{K}e^{(1-\varepsilon)cf(m)}\sum _{s\leq s'}\sum_{k\leq k'} \log(\ell_k)e^{-(1-\varepsilon/2)c \log(\ell_k^s) f'(m)} \cdot e^{2c \log^2(\ell_k^s)f''(m)}\nonumber\\
    &\geq \frac{1}{K}e^{(1-\varepsilon)cf(m)}\sum _{s\leq s'}\sum_{k\leq k'} \log(\ell_k)\ell_k^{-(1-\varepsilon/2)c s f'(m)} \left(1+2c \log^2(\ell_k^s)f''(m)\right)\;,
\end{align}
where the equality holds by Lemma \ref{lem:recursion_Q}, the second inequality holds by induction, the fact that for all $x>2$ we have $\log(\lfloor x\rfloor )\geq \log(x)-2/x$, and by assuming $n$ is large enough so that $\ell_{k}^{s}/n\leq \frac{1}{8}\varepsilon \log(\ell_{k}^{s})$ for all $s\leq s'$ and $k\leq k'$; the third inequality holds\footnote{As in Section \ref{sec:upper}, $f'(m)$ and $f''(m)$ mean plugging $x=m=\log(Cn)$ into the first/second derivatives of $f$.}
by (\ref{eq:secondderivative}) with
$r=m$, $t=(1+\varepsilon/4)\log(\ell_k^s)$ noting that $m=\log(Cn)\geq 10^3$ and $\log(\ell_{k'}^{s'}) \leq \log(n^{1/10}) \leq m/10$, and the last inequality holds as 
for all $x$ we have $1+x\leq e^{x}$.

Hence, to complete the proof it remains to establish that the double sum in \eqref{eq:less_than_n_primes_2} is bounded from below by $\log(n)$.
Since $(1-\varepsilon/2)c>1$ and since $\alpha \lceil k\log(k+1)\rceil \leq \log(\ell_k)\leq \beta \lceil k\log(k+1)\rceil$ for all $k\geq 1$, the double sum in \eqref{eq:less_than_n_primes_2} is at least $S_1-S_2+S_3$ where (note that $f''(m)<0$)
\[
    S_1= \sum _{s=1}^{s'}\sum_{k=1}^{\infty} \log(\ell_k)\ell_k^{-(1-\varepsilon/2)c s f'(m)} \quad\text{,}\quad S_2=\sum_{s=1}^{s'}\sum_{k=k'}^{\infty} \beta ke^{-\alpha s f'(m)k}\;,
\]
\[
    S_3=\sum _{s=1}^{s'}\sum_{k=1}^{\infty}  2\beta^3 c s^2k^3f''(m)e^{-\alpha  s f'(m) k}\;.
\]
\paragraph{Bounding $S_1$:}
By Lemma \ref{lem:technical} there is a $\delta_0=\delta_0(\varepsilon)$ so that $\Phi(\delta)\geq  {(1-\varepsilon/4)\delta^{-2}}/{\log(1/\delta)}$ holds for every $\delta < \delta_0$.
Assume $n$ is large enough so that $(1-\varepsilon/2)c  \log\log(n) f'(m)\leq \delta_0$. Then,
\begin{align*}
     S_1&\geq  \sum_{s=1}^{\log\log(n)} \frac{(1-\frac{\varepsilon}{4})f'(m)^{-2}}{(1-\frac{\varepsilon}{2})^2c^2 \log\left(\frac{1}{(1-\frac{\varepsilon}{2})c s f'(m)}\right)s^2} \geq \frac{8(1-\frac{\varepsilon}{4})^{3/2}\log(n)}{(1-\frac{\varepsilon}{2})^2c^2}\sum_{s=1}^{\log\log(n)} \frac{1}{s^2} \geq \left(1+\frac{\varepsilon}{2}\right)\log(n)\;,
\end{align*}
where the first inequality holds by Lemma \ref{lem:technical} applied with $\delta=(1-\varepsilon/2)c s f'(m) ~~(\leq \delta_0)$, the second inequality holds as $\frac{f'(m)^{-2}}{-\log\left({(1-\varepsilon/2)c s f'(m)}\right)}\geq 8\sqrt{1-\varepsilon/4}\log(n)$ for all $1\leq s\leq \log\log(n)$ and large $n$. The last inequality holds provided $n$ is large enough so that $\sum_{s=1}^{\log\log(n)}\frac{1}{s^2}\geq \sqrt{1-\varepsilon/4}\cdot \pi^2/6$.

\paragraph{Bounding $S_2$:}
Observe that the following holds for large enough $n$:
\begin{align*}
    S_2&\leq \beta \log\log(n)\cdot \frac{e^{-\alpha f'(m)\log^{3/4}(n)}}{\left(1-e^{-\alpha f'(m)}\right)^2}\leq  \frac{\beta \log\log(n)\cdot e^{-\alpha f'(m)\log^{3/4}(n)}}{\alpha ^2\left(f'(m)-f'(m)^2\right)^2} \leq \frac{1}{4}\varepsilon \log(n)\;,
\end{align*}
where the first inequality holds as for all $M>0$ and $0<x<1$ we have $\sum_{k=M}^{\infty} k x^{k}\leq \frac{x^M}{(1-x)^2}$, the second inequality holds as for all $0<x<1$ we have $(1-e^{-x})^2>(x-x^2)^2$.

\paragraph{Bounding $S_3$:}
Since $S_1-S_2\geq (1+\varepsilon/4)\log(n)$, it is enough to prove that $S_3\geq -\frac{1}{4}\varepsilon \log(n)$. Indeed,
\begin{align*}
    S_3&\geq 2\beta^3 cf''(m) \sum _{s=1}^{\log\log(n)}s^2\frac{6e^{-\alpha s f'(m)}}{\left(1-e^{-\alpha s f'(m)}\right)^4} \geq\frac{36\beta^3 cf''(m)}{\alpha^4f'(m)^4}  \sum_{s=1}^{\infty} \frac{1}{s^2}\geq -\frac{1}{4}\varepsilon \log(n)\;,
\end{align*}
where the first and second inequalities hold as for all $0<z< 1$ we have\footnote{This follows by taking the derivative of the identity $\sum_{k=0}^{\infty}z^{k}=\frac{1}{1-z}$ three times with respect to $z$.} $\sum_{k=0}^{\infty}k^3z^{k}\leq \frac{6z}{(1-z)^4}$ and\footnote{See Lemma \ref{lem:estimations} in the appendix for a short proof.} $e^{-z}/(1-e^{-z})^4\leq 3/z^4$. The last inequality holds for large enough $n$.

\appendix

\section{Some Elementary inequalities}

\begin{lemma}\label{lem:Taylor_series}
    Let $f(x)=\sqrt{x/\log(x)}$. Then, for every $m\geq 10$ and $0\leq x\leq m-10$ we have:
    \[
        f(m-x)\leq f(m)-xf'(m)\;.
    \]
    For all $m\geq 10^3$ and $0\leq x\leq m/10$ we have
    \[
        f(m-x)\geq f(m)-xf'(m)+x^2f''(m)\;.
    \]
\end{lemma}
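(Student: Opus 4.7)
The plan is to rely on Taylor's theorem with Lagrange remainder, viewing both inequalities as consequences of the fact that $f''$ is negative and slowly varying on the range of interest. From the formulas $f'(x)=\frac{1-1/\log(x)}{2\sqrt{x\log(x)}}$ and $f''(x)=\frac{-1+3/\log^2(x)}{4x\sqrt{x\log(x)}}$ already recorded in the paper, one sees that $f''(x)<0$ whenever $\log^{2}(x)>3$, and in particular whenever $x\geq 10$, so that $f$ is concave on $[10,\infty)$. For the first inequality I would apply Taylor's theorem of order one on $[m-x,m]$ to write $f(m-x)=f(m)-xf'(m)+\tfrac{1}{2}x^{2}f''(\xi)$ for some $\xi\in[m-x,m]$. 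The hypothesis $x\leq m-10$ forces $\xi\geq m-x\geq 10$, so $f''(\xi)<0$ and the remainder term is nonpositive, yielding $f(m-x)\leq f(m)-xf'(m)$.

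For the second inequality I would apply the same expansion: $f(m-x)=f(m)-xf'(m)+\tfrac{1}{2}x^{2}f''(\xi)$ for some $\xi\in[m-x,m]\subseteq[9m/10,m]$. Subtracting and dividing by $x^{2}$, the desired inequality is equivalent to $\tfrac{1}{2}f''(\xi)\geq f''(m)$, which (since $f''<0$ throughout this range) is the same as $|f''(\xi)|\leq 2|f''(m)|$. Using the explicit formula for $f''$, I would write
\[
\frac{|f''(\xi)|}{|f''(m)|}=\frac{1-3/\log^{2}(\xi)}{1-3/\log^{2}(m)}\cdot\Bigl(\frac{m}{\xi}\Bigr)^{3/2}\sqrt{\frac{\log(m)}{\log(\xi)}}\;,
\]
and bound each factor separately. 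The first factor is at most $1$ because $\xi\leq m$; the second is at most $(10/9)^{3/2}<1.18$ because $\xi\geq 9m/10$; and, using $\log(m)\geq\log(10^{3})>6.9$ and $\log(9m/10)\geq\log(m)-\log(10/9)>6.79$, the third factor is at most $\sqrt{6.9/6.79}<1.01$. Their product sits comfortably below $2$, which completes the proof.

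The only genuine content is the ratio estimate in the second part; its role is to verify that the constants $10$ and $10^{3}$ chosen in the hypotheses provide enough slack to absorb both the $3/\log^{2}$ correction in $f''$ and the $(m/\xi)^{3/2}$ factor coming from letting $\xi$ range over $[9m/10,m]$. Everything else reduces to a direct application of Taylor's theorem with Lagrange remainder.
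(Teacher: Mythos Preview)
Your proof is correct and follows essentially the same approach as the paper's. For the first inequality both arguments invoke the Lagrange remainder and the negativity of $f''$ on $[10,\infty)$; for the second, the paper introduces $g_m(x)=f(m-x)-f(m)+xf'(m)-x^{2}f''(m)$ and shows $g_m''(x)=f''(m-x)-2f''(m)>0$, which is exactly your inequality $|f''(\xi)|\le 2|f''(m)|$ rewritten, and the numerical verification is carried out by the same kind of factor-by-factor estimate.
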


\begin{proof}
By Lagrange's remainder theorem, for every $m>1$ and $x$ such that $0\leq x\leq m$ there is $m-x\leq c\leq m$ such that
\begin{align*}
    f(m-x)&=f(m)- xf'(m)+x^2f''(c)/2\;.
\end{align*}
Noting that $f''(z)=\frac{3-\log^2(z)}{4z^{3/2}\log^{5/2}(z)}$ is negative for all $z>e^{\sqrt{3}}$ we obtain the first part of the lemma for $m>e^{\sqrt{3}}$ and $x>0$ with $m-x>e^{\sqrt{3}}$.

For the second part of the lemma, for every $m$ and $x\leq m$ we let
\[
    g_m(x)\coloneqq f(m-x)-f(m)+xf(m)-x^2f''(m)\;.
\]
Note that $g_m(0)=g_m'(0)=0$. Hence, proving that for all $m\geq 10^3$ and $0\leq x\leq m/10$ we have $g''_m(x)>0$ implies the second part of the lemma.
Indeed, let $m\geq 10^3$ and let $x=\varepsilon m $ with $0\leq \varepsilon\leq 1/10$. We have
\begin{align*}
    g''_m(\varepsilon m)&=\frac{3-\log^2((1-\varepsilon)m)}{4(1-\varepsilon)^{3/2}m^{3/2}\log^{5/2}((1-\varepsilon)m)}-\frac{3-\log^2(m)}{2m^{3/2}\log^{5/2}(m)}\\
    &\geq \frac{1}{4m^{3/2}}\left(\frac{1}{(1-\varepsilon)^{3/2}}\cdot \frac{3-\log^{2}(9m/10)}{\log^{5/2}(9m/10)}- 2\cdot \frac{3-\log^{2}(m)}{\log^{5/2}(m)}\right)\\
    &\geq \frac{1}{4m^{3/2}}\cdot {\frac{2(1-1/10)-(1-\varepsilon)^{-3/2}(1+1/10)}{\sqrt{\log(m)}}}>0
\end{align*}
where the first inequality holds as $\frac{3-\log^2(x)}{\log^{5/2}(x)}$ is monotone increasing for $x>e^{\sqrt{15}}$ and as $(1-\varepsilon)m\geq 900\geq e^{\sqrt{15}}$, and the second inequality holds as $m>10^3$ which implies both $ \frac{3-\log^2(9m/10)}{\log^{5/2}(9m/10)}\geq \frac{-(1+1/10)}{\sqrt{\log(m)}}$ and $\frac{3-\log^2{m}}{\log^{5/2}(m)}\leq \frac{-(1-1/10)}{\sqrt{\log(m)}}$, and the last inequality holds as $\varepsilon\leq 1/10$.
\end{proof}

\begin{lemma}\label{lem:estimations}
For all $z>0$ we have
\begin{equation}\label{eq1}
    \frac{1}{z^2}-\frac{1}{12}\leq \frac{e^z}{(e^z-1)^2}=\frac{e^{-z}}{(e^{-z}-1)^2}\leq \frac{1}{z^2}\;,
\end{equation}
\begin{equation}\label{eq2}
    \frac{1}{z^2}-\frac{1}{12}\leq \frac{e^z((e^z+1)z-e^z+1)}{(e^z-1)^3}\leq \frac{1}{z^2}+1\;,
\end{equation}
and for all $0<z<1$ we have
\begin{equation}\label{eq3}
        \frac{e^{-z}}{(1-e^{-z})^4}=\frac{e^{3z}}{(e^z-1)^4}\leq \frac{3}{z^4}\;.
    \end{equation}
\end{lemma}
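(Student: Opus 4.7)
All three inequalities compare rational functions of $e^z$ against rational functions of $z$, and the plan is, in each case, to apply the hyperbolic factorizations $e^z - 1 = 2e^{z/2}\sinh(z/2)$ and $e^z + 1 = 2e^{z/2}\cosh(z/2)$ to cancel the exponentials, and then to compare Taylor series around $z = 0$ term by term. The elementary bound $\sinh(t) \geq t$ for $t \geq 0$ (immediate from the power series of $\sinh$) will be used repeatedly.

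For \eqref{eq3}, the factorization gives $\frac{e^{3z}}{(e^z-1)^4} = \frac{e^z}{16\sinh^4(z/2)} \leq \frac{e^z}{z^4}$, and $e^z < e < 3$ for $0 < z < 1$ finishes the job. For \eqref{eq1}, the identity $\frac{e^z}{(e^z-1)^2} = \frac{1}{4\sinh^2(z/2)}$, combined with the series $4\sinh^2(z/2) = 2(\cosh z - 1) = z^2 + z^4/12 + z^6/360 + \cdots$ (all nonnegative coefficients), gives the upper bound $1/z^2$ by dropping the tail. For the lower bound, after handling the trivial case $z^2 \geq 12$, clearing denominators reduces the claim to $\sinh^2(z/2)(12 - z^2) \leq 3z^2$; expanding the right-hand side as the geometric series $\sum_{k \geq 1} z^{2k}/(4 \cdot 12^{k-1})$ and the left as $\sum_{k \geq 1} z^{2k}/(2(2k)!)$, the inequality follows coefficient-by-coefficient from the elementary bound $2 \cdot 12^{k-1} \leq (2k)!$, immediate by induction.

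For \eqref{eq2}, both factorizations together rewrite the middle expression as $\frac{z\cosh(z/2) - \sinh(z/2)}{4\sinh^3(z/2)}$, and both numerator and denominator admit explicit Taylor series (for instance via $4\sinh^3(u) = \sinh(3u) - 3\sinh(u)$). Clearing denominators in each of the two desired bounds produces polynomial inequalities in $\sinh$ and $\cosh$, which can be settled for small $z$ by the same coefficient-matching strategy as in \eqref{eq1}; for large $z$ the middle expression decays like $ze^{-z}$, so both bounds become automatic. The main obstacle will be the bookkeeping in \eqref{eq2}, where several series must be multiplied and compared simultaneously, but everything ultimately reduces to elementary factorial-type inequalities once the hyperbolic factorization is in place.
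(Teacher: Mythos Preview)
Your approach is correct and genuinely different from the paper's. The paper works directly with the power series of $e^z$, $e^{2z}$, etc.: for each inequality it clears denominators, expands both sides term by term, and compares the coefficients of $z^n$. You instead pass through the hyperbolic identities $e^z-1=2e^{z/2}\sinh(z/2)$ and $e^z+1=2e^{z/2}\cosh(z/2)$, which cancel the stray exponentials and leave expressions like $\frac{1}{4\sinh^2(z/2)}$ and $\frac{z\cosh(z/2)-\sinh(z/2)}{4\sinh^3(z/2)}$ whose series have only even (or only odd) terms. Your treatment of \eqref{eq3} is notably cleaner than the paper's: the paper reduces \eqref{eq3} to the auxiliary bound $\frac{e^{2z}}{(e^z-1)^2}\le \frac{1}{z^2}+\frac{1}{z}+1$ and then invokes \eqref{eq1}, whereas your one-line argument $\frac{e^{3z}}{(e^z-1)^4}=\frac{e^z}{16\sinh^4(z/2)}\le \frac{e^z}{z^4}<\frac{3}{z^4}$ for $0<z<1$ is immediate. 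For \eqref{eq2} the paper has an advantage you may want to borrow: it factors the middle expression as $\frac{e^z}{(e^z-1)^2}\cdot\frac{(e^z+1)z-e^z+1}{e^z-1}$ and bounds the second factor separately by $1\le\cdot\le 1+z^2$, so \eqref{eq2} falls out of \eqref{eq1} with almost no extra work. Your hyperbolic route for \eqref{eq2} does go through (I checked that the coefficient comparison works for both bounds), but it requires handling the series for $4\sinh^3(z/2)=\sinh(3z/2)-3\sinh(z/2)$ against $z\cosh(z/2)-\sinh(z/2)$ and is noticeably more laborious than the paper's factorization. One small wording issue: in your lower bound for \eqref{eq1}, ``expanding the right-hand side as the geometric series'' only makes sense after first dividing through by $12-z^2$; you might say so explicitly.
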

\begin{proof}
The proof of all five inequalities are similar; we replace $e^z$ with its power series, and then compare the coefficients on both
sides of the inequality. We prove the upper bound/lower bound of \eqref{eq1} with respect to $\frac{e^{z}}{(e^{z}-1)^2}$. The upper bound of \eqref{eq1} is equivalent to
$
    z^2e^z\leq e^{2z}-2e^{z}+1.
$
By expanding each side to its power series, we need to show that
\[
    \sum_{n=2}^{\infty}\frac{z^n}{(n-2)!} \leq \sum_{n=2}^{\infty}\frac{2^n-2}{n!}z^n\;.
\]
To see this, note that the coefficient of $z^n$ on the left-hand side is always at most the coefficient of $z^n$ on the right-hand side for all $n\geq 2$ as clearly $n(n-1)\leq (2^n-2)$ for all $n\geq 2$.

The lower bound of \eqref{eq1} is equivalent to
$
    12e^{2z}-24e^{z}+12\leq z^2(e^{2z}+10e^{z}+1).
$
By expanding each side to its power series, we wish to show that
\[
    \sum_{n= 3}^{\infty}\frac{12\cdot(2^{n}-2)}{n!}z^n\leq \sum_{n=3}^{\infty}\frac{n(n-1)(2^n+10)}{n!}z^n\;.
\]
To see this, we show that each of the coefficients of $z^n$ on the left-hand side is at most the ones of $z^n$ on the right-hand side.
The coefficient of $z^3$ on the left-hand side is $72$ and on the right-hand side it is $108$; for $n\geq 4$ we have $12\leq n(n-1)$ and clearly for all $n$ we have $2^n-2\leq 2^n+10$, implying that the coefficient of $z^{n}$ on the left-hand side is at most the coefficient of $z^{n}$ on the right-hand side.

Observe that \eqref{eq2} follows directly from inequality \eqref{eq1} and the following inequality
\begin{equation}\label{eq4}
    1\leq \frac{(e^z+1)z-e^z+1}{e^z-1}\leq 1+z^2\;.
\end{equation}
The upper bound in \eqref{eq4} is equivalent to
$
    2+z+z^2\leq (2-z+z^2)e^z.
$
By expanding each side to its power series, it is enough to show that $2n(n-1)-n+1\geq 0$ for all $n\geq 2$. This clearly holds. As for the lower bound, note that as $z>0$ the inequality is equivalent to
$
    2e^z-2\leq (e^z+1)z.
$
By expanding each side to its power series, we wish to show that
\[
    \sum_{n=2}^{\infty}\frac{2z^n}{n!}\leq \sum_{n=2}^{\infty}\frac{z^n}{(n-1)!}\;,
\]
This clearly holds as for all $n\geq 2$ the coefficient of $z^n$ on the left-hand side is at most the coefficient of $z^n$ on the right-hand side.

Clearly, to prove \eqref{eq3} it is enough to prove that $\frac{e^{2z}}{(e^{z}-1)^2}\leq \frac{1}{z^2}+\frac{1}{z}+1$, as then using inequality \eqref{eq1}, we are done.
The above is equivalent to proving that
    $
        2z^2e^{z}\leq (1+z)(e^{2z}-2e^{z}+1)+z^2.
    $
    By expanding each side to its power series, we wish to show that
    \[
        \sum_{n=4}^{\infty}\frac{2n(n-1)}{n!}z^n\leq \sum_{n=4}^{\infty} \frac{2^n-2+n(2^{n-1}-2)}{n!}z^{n}\;.
    \]
    As in previous cases, this holds since the coefficient of $z^{n}$ on the left-hand side is at most as large as the coefficient of $z^{n}$ on the right-hand side.
\end{proof}

\section{Estimating $\phi(\delta)$ and $\Phi(\delta)$ Assuming the Prime Number Theorem}\label{sec:phi}

Hardy and Ramanujan \cite{HR1} proved that
$\phi(\delta)\sim \delta^{-1}/\log(1/\delta)$, while only relying on elementary facts.
Let us show that if one is willing to rely
on the prime number theorem, which is equivalent to the statement that $\log(\ell_k)\sim k\log k$, then
one can prove this estimate quite easily. Using the same approach, one can also easily prove Lemma \ref{lem:technical} (again, assuming
the prime number theorem).

We claim that the following holds
$$
\delta^{-1}/\log(1/\delta) \sim \int_{1}^{\infty} e^{-\delta x\log x} dx \sim \sum_{k \geq 2}e^{-\delta k\log k} \sim \phi(\delta).
$$
The first estimate is proved below, the second estimate follows from the fact that $e^{-\delta x}$ is a monotone function,
and the last estimate follows from the fact that $\log(\ell_k)\sim k \log k$, that is, from the fact that $\phi(\delta)$
is bounded from above (resp.\ below) by $\sum_{k}e^{-\delta(1-o(1)) k\log k}$ (resp.\ $\sum_{k}e^{-\delta(1+o(1)) k\log k}$).

It thus remains to estimate the above integral. We prove that for every $\varepsilon$ and all small enough $\delta < \delta_0(\varepsilon)$,
this integral is bounded from above by $(1+\varepsilon)\delta^{-1}/\log(1/\delta)$. The proof of the lower bound is identical.
Let $a_0=a_0(\varepsilon)$ be such that $\log(x)\geq (1-\frac14\varepsilon)\log(x\log(x))$ for every $x \geq a_0$.
Then
\begin{align*}
  \int_{1}^{\infty} e^{-\delta x\log x} dx &\sim \int_{a_0}^{\infty} e^{-\delta x\log x} dx \leq \frac{1}{(1-\varepsilon/8)\delta} \int_{\delta a_0\log(a_0)}^{\infty} \frac{e^{-z}}{\log(z/\delta)} dz \\
   &\leq \frac{1+\varepsilon/4}{\delta} \left(\int_{\delta a_0\log(a_0)}^{1/\log^2(1/\delta)}\frac{e^{-\delta z}}{\log(z/\delta)}dz+\int_{1/\log^2(1/\delta)}^{\infty} \frac{e^{-z}}{\log(z/\delta)} dz\right)\\
   &\leq \frac{1+\varepsilon/4}{\delta}\left(\frac{C}{\log^2(1/\delta)}+\frac{1}{\log(1/\delta)-2\log\log(1/\delta)}\int_{0}^{\infty}e^{-z}dz\right)\\
   &\leq \frac{\varepsilon/2}{\delta\log(1/\delta)}+\frac{1+\varepsilon/2}{\delta\log(1/\delta)}\int_{0}^{\infty}e^{-z}dz= \frac{1+\varepsilon}{\delta\log(1/\delta)}\;,
\end{align*}
where in the first inequality we used the substitution $z=\delta x \log x$, which, by the choice of $a_0$, guarantees that
$dz=\delta(\log(x)+1)dx\geq \delta(1-\varepsilon/8)\log(z/\delta)dx$, and in the third line $C$ is a constant that depends only on $a_0$.
\end{document}